\numberwithin{equation}{section}
\theoremstyle{definition}
\newtheorem{Definition}{Definition}[section]
\newtheorem{Example}[Definition]{Example}
\newtheorem{Remark}[Definition]{Remark}
\theoremstyle{plain}
\newtheorem{Theorem}[Definition]{Theorem}
\newtheorem{Proposition}[Definition]{Proposition}
\newtheorem{Lemma}[Definition]{Lemma}
\newcommand{\la}{\lambda}
\newcommand{\N}{\mathbb{N}}
\newcommand{\Z}{\mathbb{Z}}
\newcommand{\Q}{\mathbb{Q}}
\newcommand{\C}{\mathbb{C}}
\newcommand{\Fh}{\mathfrak{h}}
\DeclareMathOperator{\End}{End}
\DeclareMathOperator{\Id}{Id}
\renewcommand{\hat}{\widehat}
\newcommand{\nord}{\begin{subarray}{c}\circ\\\circ\end{subarray}}
\title[Simple Whittaker modules over free bosonic orbifold VOAs]{Simple Whittaker modules over free bosonic orbifold vertex operator algebras}
\author{Jonas T. Hartwig\and Nina Yu}
\date{\today}
\address{Department of Mathematics, Iowa State University, Ames, IA-50011, USA}
\email{jth@iastate.edu}
\address{School of Mathematical Sciences, Xiamen University, Fujian, 361005, CHINA}
\email{ninayu@xmu.edu.cn}
\begin{document}
\maketitle
\begin{abstract}
We construct weak (i.e. non-graded) modules over the vertex operator algebra $M(1)^+$, which is the fixed-point subalgebra of the higher rank free bosonic (Heisenberg) vertex operator algebra with respect to the $-1$ automorphism. These weak modules are constructed from Whittaker modules for the higher rank Heisenberg algebra. We prove that the modules are simple as weak modules over $M(1)^+$ and calculate their Whittaker type when regarded as modules for the Virasoro Lie algebra. Lastly, we show that any Whittaker module for the Virasoro Lie algebra occurs in this way. These results are a higher rank generalization of some results by Tanabe \cite{T1}.
\end{abstract}


\section{Introduction}

Whittaker modules were first studied by Kostant \cite{K} in the context of finite-dimensional complex semisimple Lie algebras and play an important role in representation theory and geometry. They have since been defined and investigated  in many different settings such as the Virasoro Lie algebra \cite{OW,GLZ,MZ}, affine Lie algebras \cite{ALZ}, a general framework for Lie algebras \cite{BM}, and generalized Weyl algebras \cite{BO,FH}.

Vertex operator algebras (VOAs) \cite{FLM,LL} form a mathematical framework for studying conformal field theory. One of the fundamental methods for obtaining new VOAs from known ones is to take the fixed-point sub-VOA with respect to a finite order automorphism. Namely,  let $V$ be a rational VOA and $G$ be a finite automorphism group of $V$. Then $V^G$  is the resulting VOA is called an orbifold VOA because it corresponds to the conformal field theory on an orbifold (quotient) space.

Let $V$ be a rational VOA and $G$  be a finite automorphism group of $V$. The orbifold conjecture says that $V^G$ is rational and every irreducible $V^G$ module occurs in an irreducible $g$-twisted $V$-module for some $g\in G$. Miyamoto and Carnahan prove that if $G$ is solvable, then $V^G$ is rational \cite{CM}.  Later it is proved that every irreducible $V^G$-module occurs as an irreducible $g$-twisted $V$-module for some $g\in G$ \cite{DRX}.

One of the most important VOA is the Heisenberg VOA, also known as the free bosonic VOA and is denoted by $M(1)$. It is constructed from a complex vector space $\mathfrak{h}$  with a non-degenerate bilinear form. The dimension of the vector space is called the rank of the VOA. The VOA $M(1)$ has a natural automorphism of order two, induced by multiplication by $-1$ on the vector space $\mathfrak{h}$. The corresponding orbifold VOA is denoted by $M(1)^+$. Irreducible modules for $M(1)^+$ were classified by Dong and Nagatomo \cite{DN}.

 The orbifold conjecture still makes sense for non-$\mathbb{N}$-graded weak modules. The conjecture was confirmed by Tanabe for a class of simple non-$\mathbb{N}$-graded weak $M(1)^+$-modules defined by using Whittaker vectors for the Virasoro algebra \cite{T1}. Namely, let $M(1)$ be the VOA with the Virasoro element $\omega$ associated to the Hesenberg algebra of rank 1. Tanabe proved that any simple weak $M(1)^+$-module with at least one Whittaker vector is isomorphic to some simple weak $M(1)$-module or to some $ \theta$-twisted simple weak $M(1)$-module.

The structure of such Whittaker modules for $M(1)^+$ becomes more complicated when we consider higher rank case. In this paper, we construct a class of simple weak $M(1)^+$-modules when $M(1)$ is the VOA with the Virasoro element $\omega$ associated to the Heisenberg algebra of rank $\ell$ where $\ell\ge 1$. These modules are generalizations of Tanabe's modules and they are Whittaker modules with respect to the Virasoro algebra. We also determine explicitly the subfamily of all constructed simple weak $M(1)^+$-modules that give rise to the same Virasoro algebra module. In the rank one case there are only two, while in our case we get an affine variety which we explicitly describe.

\section{The vertex operator algebra $M(1)$}

\subsection{Definition of vertex operator algebra}

We recall the definition of a vertex operator algebra \cite{LL}.

\begin{Definition}
A \emph{vertex operator algebra} $(V,Y,\boldsymbol{1},\omega)$ consists of
\begin{enumerate}[{\rm (i)}]
\item a $\Z$-graded vector space $V=\bigoplus_{n\in\Z} V_{(n)}$ called the \emph{state space},
\item a linear map
\[
\begin{aligned}
Y(\cdot,z):V &\to \End(V)[[z,z^{-1}]] \\
 v &\mapsto Y(v,z)=\sum_{n\in\Z} v_n z^{-n-1}
\end{aligned}
\]
called the \emph{state-field correspondence};
\item a distinguished vector $\boldsymbol{1}\in V_{(0)}$ called the \emph{vacuum vector};
\item a distinguished vector $\omega\in V_{(2)}$ called the \emph{conformal vector};
\end{enumerate}
such that the following properties hold:
\begin{enumerate}
\item \emph{grading restrictions}: $\dim V_{(n)}<\infty$ for $n\in\Z$ and $V_{(n)}=0$ for $n\ll 0$;
\item \emph{truncation condition}: $Y(u,z)v\in V((z))$ for all $u,v\in V$;
\item \emph{vacuum property}: $Y(\boldsymbol{1},z)=1$;
\item \emph{creation property}: $Y(v,z)\boldsymbol{1}\in v+ V[[z]]z$ for all $v\in V$;
\item \emph{Jacobi identity}: For all $a,b\in V$,
\[
\begin{aligned}
 & z_{0}^{-1}\delta\left(\frac{z_{1}-z_{2}}{z_{0}}\right)Y\left(a,z_{1}\right)Y\left(b,z_{2}\right)-z_{0}^{-1}\delta\left(\frac{z_{2}-z_{1}}{-z_{0}}\right)Y\left(b,z_{2}\right)Y\left(a,z_{1}\right)\\
 & =z_{2}^{-1}\delta\left(\frac{z_{1}-z_{0}}{z_{2}}\right)Y\left(Y\left(a,z_{0}\right)b,z_{2}\right).
\end{aligned}
\]
\item \emph{Virasoro algebra relations}: There exists $c_V\in \C$ such that for all $m,n\in \Z$:
\begin{equation}
[L(m),L(n)]=(m-n)L(m+n)+\frac{1}{12}(m^3-m)\delta_{m+n,0}c_V
\end{equation}
where $L(n)=\omega_{n+1}$ for $n\in\Z$;
\item \emph{$L(0)$-grading:} $L(0)v=nv$ for all $v\in V_{(n)}$ and all $n\in\Z$;
\item \emph{$L(-1)$-derivation property}: $Y(L(-1)v,z)=\frac{d}{dz}Y(v,z)$ for all $v\in V$.
\end{enumerate}

\end{Definition}

\subsection{Definition of the vertex operator algebra $M(1)$}

We recall the well-known construction of the vertex operator algebra $M(1)$, called the \emph{Heisenberg} (or \emph{free bosonic}) vertex operator algebra.

Let $\mathfrak{h}$ be the complexification of an $\ell$-dimensional Euclidean space with orthonormal basis $\{h_1,h_2,\ldots,h_\ell\}$, viewed as an abelian Lie algebra. Its affinization is
\begin{equation}
\hat{\mathfrak{h}} = \mathfrak{h}\otimes\C[t,t^{-1}]\oplus\C K 
\end{equation}
with
\begin{equation} \label{eq:heisenberg}
[K,\hat{\mathfrak{h}}]=0,\qquad 
[a(m),b(n)]=m\delta_{m+n,0}(a,b)K
\end{equation}
for $a,b\in\mathfrak{h}$, $m,n\in\Z$ and we put $a(n)=a\otimes t^n$. We use the form $(\cdot,\cdot)$ to identify $\mathfrak{h}$ with its dual space $\mathfrak{h}^*$.
Let $\C e^0$ be the one-dimensional module over the Lie algebra $\mathfrak{h}\otimes\C[t]\oplus\C K$ with action given by
\[ h(n)e^0=0,\quad  \forall h\in \mathfrak{h},\, n\ge 0;\qquad  Ke^0 = e^0. \]
Define the vector space $M(1)$ by
\begin{equation}
M(1) = U(\hat{\mathfrak{h}})\otimes_{U(\mathfrak{h}\otimes\C[t]\oplus\C K)} \C e^0.
\end{equation}
On $M(1)$ define the state-field correspondence by
\begin{equation}
Y(a^{(1)}(n_1)\cdots a^{(r)}(n_r)e^0,z)=a^{(1)}(z)_{n_1}\cdots a^{(r)}(z)_{n_r}\Id_{M(1)}
\end{equation}
for $a^{(i)}\in\mathfrak{h}$ and $n_i\in\Z$, (see \cite[Thm.~6.2.11]{LL} for details).
The vacuum vector is $\boldsymbol{1}=e^0$ and the conformal vector is given by
\begin{equation} \label{eq:virasoro-vector}
\omega = \frac{1}{2}\sum_{i=1}^\ell h_i(-1)^2\boldsymbol{1}.
\end{equation}
In particular,
\begin{equation}
Y(\omega,z) = L(z) = \sum_{n\in\Z} L_n z^{-n-2},\qquad
L_n=\frac{1}{2}\sum_{m\in\Z}\sum_i \nord h_i(-m)h_i(m+n) \nord
\end{equation}

\section{The fixed-point vertex operator subalgebra $M(1)^+$}

Consider the order two automorphism $\theta$ of the vector space $M(1)$ given by
\begin{equation}\label{eq:theta}
\theta\big( h_{i_1}(-n_1)h_{i_2}(-n_2)\cdots h_{i_k}(-n_k) \boldsymbol{1}\big) = (-1)^k h_{i_1}(-n_1)h_{i_2}(-n_2)\cdots h_{i_k}(-n_k) \boldsymbol{1}
\end{equation}
for $i_j\in\{1,2,\ldots,\ell\}$ for all $j$ and $n_1\ge n_2\ge \cdots\ge n_k>0$.
 Let $M(1)^+$ be the corresponding subspace of fixed-points with respect to $\theta$:
\begin{equation}
M(1)^+=\big\{ v\in M(1)\mid \theta(v)=v\big\}.
\end{equation}
Note that the Virasoro vector $\omega$ belongs to $M(1)^+$, by \eqref{eq:virasoro-vector}. It is well-known that $M(1)^+$ is a vertex operator subalgebra of $M(1)$.
Let
\begin{equation}
J_a= h_a(-1)^4\boldsymbol{1}-2h_a(-3)h_a(-1)\boldsymbol{1}+\frac{3}{2}h_a(-2)^2\boldsymbol{1},\qquad a=1,2,\ldots,\ell.
\end{equation}
The following statement is known, see \cite{DN}.

\begin{Proposition}
\begin{enumerate}[{\rm (a)}]
\item The VOA $M(1)^+$ is generated by the the following subset:
\begin{equation}
\{J_a\mid a=1,2,\ldots,\ell\}\cup\{h_a(-1)h_b(-1)\boldsymbol{1}\mid a,b=1,2,\ldots,\ell\}
\end{equation}
\item The Zhu algebra $A(M(1)^+)$ is finitely generated.
\end{enumerate}
\end{Proposition}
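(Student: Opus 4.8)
The plan is to prove (a) by two inclusions and then to read off (b) from the generation statement. Write $U$ for the vertex subalgebra of $M(1)$ generated by $\{J_a\}\cup\{h_a(-1)h_b(-1)\boldsymbol{1}\}$. The inclusion $U\subseteq M(1)^+$ is immediate, since every listed generator is a linear combination of monomials $h_{i_1}(-n_1)\cdots h_{i_k}(-n_k)\boldsymbol{1}$ with $k$ even, hence is $\theta$-fixed by \eqref{eq:theta}. For the reverse inclusion, recall that $M(1)^+$ is spanned by such monomials with $k\in 2\Z_{\ge 0}$ and $n_1\ge\cdots\ge n_k\ge 1$; I would show each of these lies in $U$ by induction on the conformal weight $n=n_1+\cdots+n_k$. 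Weights $0$ and $2$ are immediate, since $M(1)^+_{(0)}=\C\boldsymbol{1}$ and $M(1)^+_{(2)}=\Span\{h_a(-1)h_b(-1)\boldsymbol{1}\}$; weight $3$ follows by combining the derivations $L(-1)\big(h_a(-1)h_b(-1)\boldsymbol{1}\big)=h_a(-2)h_b(-1)\boldsymbol{1}+h_a(-1)h_b(-2)\boldsymbol{1}$ with suitable $0$-th products of the quadratic generators; and in the inductive step one rewrites a weight-$n$ monomial, modulo vectors of strictly lower weight that already lie in $U$, as a combination of $L(-1)$-derivatives of weight-$(n-1)$ vectors and of products $u_m v$ with $u,v\in U$ homogeneous of weight $<n$, all of which lie in $U$.

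The essential point, and the step I expect to be the main obstacle, is weight $4$. The subalgebra generated by the quadratic vectors $h_a(-1)h_b(-1)\boldsymbol{1}$ alone is a proper subalgebra of $M(1)^+$ --- in rank one it is the $c=1$ Virasoro vertex operator algebra, which is far smaller than $M(1)^+$ --- and it does not contain all of $M(1)^+_{(4)}$; the quartic vectors $J_a$ are included precisely to fill this gap. One must therefore verify, by explicit computation of a few products of the generators, that $M(1)^+_{(4)}\subseteq U$; once this is known, the reduction used in the inductive step above carries it to every higher weight, and no additional generators are forced. In rank one this is the theorem of Dong and Griess that $M(1)^+=\langle\omega,J\rangle$, and for general $\ell$ it can be organized in the same way using the factorization $M(1)=M(1,\C h_1)\otimes\cdots\otimes M(1,\C h_\ell)$ while tracking separately the mixed generators $h_a(-1)h_b(-1)\boldsymbol{1}$ with $a\ne b$; this bookkeeping is the technical heart of the argument.

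For (b), I would invoke the standard fact that if a vertex operator algebra $V$ is generated as a VOA by a subset $S$, then its Zhu algebra $A(V)=V/O(V)$ is generated as an associative algebra by $\{a+O(V)\mid a\in S\}$. This holds because $V$ is spanned by iterated modes $a^{(1)}_{m_1}\cdots a^{(r)}_{m_r}\boldsymbol{1}$ with $a^{(i)}\in S$, the class of $\boldsymbol{1}$ is the unit of $A(V)$, and, for homogeneous $u$, the classes of the products $u_m v$ with $m\ge -1$ can be expressed through the Zhu products $[u]*[v]$ and $[v]*[u]$ together with classes involving strictly fewer modes, so that an induction writes every class as a noncommutative polynomial in the classes of elements of $S$. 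Applying this to the finite generating set $\{J_a\mid 1\le a\le\ell\}\cup\{h_a(-1)h_b(-1)\boldsymbol{1}\mid 1\le a,b\le\ell\}$ furnished by part (a) shows immediately that $A(M(1)^+)$ is finitely generated.
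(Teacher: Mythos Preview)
The paper does not give its own proof of this proposition: it is stated as a known result with a citation to Dong--Nagatomo \cite{DN}, and no argument is supplied. So there is nothing in the paper to compare your proof against beyond that reference.

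Your outline for (a) is in the spirit of the argument in \cite{DN}, but as written it is a plan rather than a proof. Two places are genuinely incomplete. First, you correctly identify weight $4$ as the crux and say one must ``verify, by explicit computation of a few products of the generators, that $M(1)^+_{(4)}\subseteq U$,'' but you do not carry out that computation; this is exactly the nontrivial part, and in \cite{DN} it takes real work. Second, your inductive step (``one rewrites a weight-$n$ monomial \ldots as a combination of $L(-1)$-derivatives \ldots and of products $u_m v$'') is asserted rather than proved: you need a concrete mechanism guaranteeing that every even monomial of weight $n$ can be reached from lower-weight elements of $U$, and your claim that ``no additional generators are forced'' beyond weight $4$ is precisely the content of the theorem, not an input to it. The tensor factorization $M(1)\cong\bigotimes_a M(1,\C h_a)$ is a helpful organizing device, but it does not by itself handle the mixed quadratics $h_a(-1)h_b(-1)\boldsymbol{1}$, which is where the higher-rank argument in \cite{DN} differs from the rank-one Dong--Griess result.

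For (b), the principle you invoke is essentially correct, but be aware that the clean statement ``$S$ generates $V$ as a VOA $\Rightarrow$ $\{[s]:s\in S\}$ generates $A(V)$'' is usually proved under the hypothesis of \emph{strong} generation (i.e.\ $V$ spanned by $s^{(1)}_{-n_1}\cdots s^{(r)}_{-n_r}\boldsymbol{1}$ with $n_i\ge 1$), which is what \cite{DN} actually establishes; your sketch of the reduction using $O(V)$-relations and weight induction is along the right lines but would need those details to be airtight. In any case, \cite{DN} in fact gives an explicit finite presentation of $A(M(1)^+)$, which is stronger than finite generation.
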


\section{The weak $M(1)^+$-modules $M(1,\boldsymbol{\lambda})$ and  $M(1,\boldsymbol{\lambda})(\theta)$}

\subsection{Definition of weak $g$-twisted $V$-modules}

\begin{Definition} \cite{AbeDongLi}
Let $V$ be a vertex operator algebra and $g$ be an automorphism of $V$ of finite order $T$ with polarization $V=\bigoplus_{r=0}^{T-1} V^r$, $V^r=\{v\in V\mid g(v)=e^{-\frac{r}{T}2\pi i}v\}$. A \emph{weak $g$-twisted $V$-module} is a pair $(M,Y_M)$ where $M$ is a vector space and $Y_M:V\to\End(M)[[z,z^{-1}]], v\mapsto \sum_{n\in\Z} v_nz^{-n-1}$ is a linear map such that for $0\le r\le T-1$, $a\in V^r$, $b\in V$ and $u\in M$:
\begin{enumerate}[{\rm (i)}]
\item $Y_M(a,z)u\in z^{-\frac{r}{T}}M((z))$,
\item $Y_M(\boldsymbol{1},z)=\Id_M$,
\item the twisted Jacobi identity holds:
\begin{alignat*}{1}
 & z_{0}^{-1}\delta\left(\frac{z_{1}-z_{2}}{z_{0}}\right)Y_{M}\left(a,z_{1}\right)Y_{M}\left(b,z_{2}\right)-z_{0}^{-1}\delta\left(\frac{z_{2}-z_{1}}{-z_{0}}\right)Y_{M}\left(b,z_{2}\right)Y_{M}\left(a,z_{1}\right)\\
 & =z_{2}^{-1}\left(\frac{z_1-z_0}{z_2}\right)^{-\frac{r}{T}}\delta\left(\frac{z_{1}-z_{0}}{z_{2}}\right)Y_{M}\left(Y\left(a,z_{0}\right)b,z_{2}\right).
\end{alignat*}
\end{enumerate}
When $g=1$ a weak $g$-twisted $V$-module is called a \emph{weak $V$-module}. A \emph{$g$-twisted weak $V$-submodule} of a $g$-twisted weak $V$-module $M$ is a subspace $N$ of $M$ such that $a_n N\subseteq N$ for all $a\in V$ and $n\in\Q$. If $M\neq 0$ and $M$ has no weak $g$-twisted $V$-submodules except $0$ and $M$ then $M$ is said to be \emph{simple}.
\end{Definition}

\subsection{Construction of $M(1,\boldsymbol{\lambda})$}

Let $\boldsymbol{\lambda}=(\lambda_0,\lambda_1,\ldots,\lambda_r)$ be a sequence where $\la_i\in \mathfrak{h}$ such that $\lambda_n=0$ for $n\gg 0$.
Let $\mathbb{C}e^{\boldsymbol{\lambda}}$ be the one-dimensional module over the Lie algebra $\mathfrak{h}\otimes \C[t]\oplus \C K$ with action given by
\[ h(n)e^{\boldsymbol{\lambda}}=(h,\lambda_n)e^{\boldsymbol{\lambda}},\; h\in \mathfrak{h},\, n\ge 0;\quad  Ke^{\boldsymbol{\lambda}} = e^{\boldsymbol{\lambda}}. \]
Consider the corresponding induced $U(\widehat{\mathfrak{h}})$-module
\begin{equation}
M(1,\boldsymbol{\lambda})=U(\hat{\mathfrak{h}})\otimes_{U(\mathfrak{h}\otimes\C[t]\oplus\C K)} \C{e^{\boldsymbol{\lambda}}}.
\end{equation}
$M(1,\boldsymbol{\lambda})$ is an example of a \emph{Whittaker module} because it is generated as a left $U(\hat{\mathfrak{h}})$-module by a \emph{Whittaker vector} $e^\la$, which is by definition a common eigenvector for $\mathfrak{h}\otimes\C[t]\oplus\C K$ such that $(\mathfrak{h}\otimes t^n\C[t])e^\la=0$ for $n\gg 0$.
By the PBW theorem, $M(1,\boldsymbol{\lambda})$ has a basis consisting of monomials
\begin{equation}\label{eq:monomials}
h_{i_1}(-n_1)h_{i_2}(-n_2)\cdots h_{i_k}(-n_k) e^\la
\end{equation}
where $i_j\in\{1,2,\ldots,\ell\}$ for all $j$ and $n_1\ge n_2\ge \cdots\ge n_k>0$.

\begin{Example}\label{ex:r0case}
Suppose $\boldsymbol{\lambda}=(\la_0,0,0,\ldots)$ for some $\la_0\in\mathfrak{h}$. Then $M(1,\boldsymbol{\lambda})=M(1,\la_0)$ is a highest weight $\hat{\mathfrak{h}}$-module, the level $1$ Verma module of highest weight $\la_0$.
\end{Example}

\begin{Example}
Suppose $\boldsymbol{\lambda}=(\la_0,\la_1,0,\ldots)$ for some $\la_0,\la_1\in\mathfrak{h}$ where $\la_1\neq 0$. Then for $|n|>1$ we have the usual behavior:
\[h(n) e^{\boldsymbol{\lambda}} =0, \forall n>1,\; h\in\mathfrak{h}\]
while the set of vectors
\[h_{i_1}(-n_1)h_{i_2}(-n_2)\cdots h_{i_k}(-n_k)e^{\boldsymbol{\lambda}}
\quad\text{where $n_1\ge n_2\ge \cdots \ge n_k>1$ and $1\le i_j\le \ell$}\] are linearly independent.
Also $h(0)e^{\boldsymbol{\lambda}}=(h,\la_0)e^{\boldsymbol{\lambda}}$ for $h\in\mathfrak{h}$.
However, for $n=\pm 1$ something different happens:
\begin{equation}\label{eq:whittaker-n1}
h(1) e^{\boldsymbol{\lambda}} = (h,\la_1) e^{\boldsymbol{\lambda}} ,\; \forall h\in  \mathfrak{h}.
\end{equation}
The Heisenberg relation \eqref{eq:heisenberg} implies that
\[h(1)h(-1) - h(-1)h(1) = (h,h)K,\;\forall h\in \mathfrak{h}.\]
Acting on $e^{\boldsymbol{\lambda}}$ in both sides and using \eqref{eq:whittaker-n1} we get
\[h(1)h(-1) e^{\boldsymbol{\lambda}} = \big( (h,\la_1) h(-1) + (h,h)\big) e^{\boldsymbol{\lambda}}. \]
Now choose $h\in\mathfrak{h}$ such that $(h,\la_1)\neq 0$ (possible since $(\cdot,\cdot)$ is non-degenerate and $\la_1\neq 0$). Then we see that $M(1,\boldsymbol{\lambda})$ is not an $\N$-graded module: When acting by $h(1)$ on the homogeneous vector $h(-1)e^{\boldsymbol{\lambda}}$ we obtain a sum of two homogeneous vectors of different degrees:
\[h(1)\cdot h(-1)e^{\boldsymbol{\lambda}} = (h,\la_1)h(-1)e^{\boldsymbol{\lambda}}  + (h,h)e^{\boldsymbol{\lambda}}. \]
Also note that this means that $h(1)$ does not even act locally nilpotently: For any $N>0$ we have
\[h(1)^N\cdot h(-1)e^{\boldsymbol{\lambda}} \in  (h,\la_1)^N h(-1)e^{\boldsymbol{\lambda}}  + \C e^{\boldsymbol{\lambda}}. \]
\end{Example}

\subsection{Construction of $M(1,\boldsymbol{\la})(\theta)$}

The following result is well known \cite{FLM}.

\begin{Proposition}
Let $V$ be a vertex operator algebra, $g$ be an automorphism of $V$, and $M$ be a simple weak $g$-twisted $V$-module. Then $M$ is a simple weak module over the orbifold vertex operator algebra $V^g$.
\end{Proposition}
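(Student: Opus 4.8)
The plan is to prove the statement in two steps: first that restricting the vertex operator map of $M$ along $V^g\hookrightarrow V$ makes $M$ an (untwisted) weak $V^g$-module, and then that this restricted module is simple.

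For the first step I would simply check the weak $V^g$-module axioms for $(M,Y_M|_{V^g})$, each of which is a specialization of the corresponding $g$-twisted axiom. Writing $T=\ord(g)$ and $V=\bigoplus_{r=0}^{T-1}V^r$ for the eigenspace decomposition, one has $V^g=V^0$, so for $a\in V^g$ the condition $Y_M(a,z)u\in z^{-0/T}M((z))=M((z))$ is precisely the truncation axiom for a weak $V^g$-module; the vacuum axiom $Y_M(\boldsymbol 1,z)=\Id_M$ carries over verbatim; and for $a,b\in V^g$ the untwisted Jacobi identity is the $r=0$ specialization of the twisted Jacobi identity, since the prefactor $\bigl(\tfrac{z_1-z_0}{z_2}\bigr)^{-r/T}$ is $1$ when $r=0$. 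Since the conformal vector $\omega$ of $V$ lies in $V^g$ and serves as the conformal vector of $V^g$, the Virasoro relations and the $L(-1)$-derivation property needed to view $M$ as a module over the sub-VOA $V^g$ hold automatically.

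For the second step, let $0\neq W\subseteq M$ be a weak $V^g$-submodule; the goal is to prove $W=M$. By simplicity of $M$ over $V$, the weak $g$-twisted $V$-submodule of $M$ generated by $W$ equals $M$, so it suffices to show that $a_nW\subseteq W$ for every homogeneous $a\in V^r$ and every $n\in\Q$ (this makes $W$ a $g$-twisted $V$-submodule, hence equal to $M$). For $r=0$ this holds by hypothesis, so everything is in the non-invariant components $V^r$ with $r\neq 0$: one must show that the $V^r$-modes, which individually do not preserve $W$, nonetheless send $W$ into $W$. I would approach this through the twisted Jacobi identity, pairing an element of $V^r$ with elements of $V^{T-r}$ so that the modes of $Y(a,z_0)b$ on the right-hand side land in $V^0=V^g$ and hence already stabilize $W$, and then read off the coefficients of the $\delta$-function identity; this is also where simplicity of $M$ over $V$ is really used.

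I expect this second step to be the main obstacle: the restriction in the first step is formal, whereas transferring control from the action of all of $V$ to the action of the fixed-point subalgebra $V^g$ on a submodule is exactly the delicate point at the heart of orbifold theory, and it is where the classical arguments of \cite{FLM} enter.
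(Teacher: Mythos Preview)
The paper does not prove this proposition; it merely records it as ``well known'' with a reference to \cite{FLM}. So there is no argument in the paper to compare against, and the only question is whether your proposal stands on its own.

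Your first step is correct and is the standard observation: for $a\in V^g=V^0$ the truncation, vacuum, and Jacobi axioms for a weak $V^g$-module are literally the $r=0$ specializations of the twisted axioms, and $\omega\in V^g$ supplies the Virasoro data. This much is exactly what one finds in \cite{FLM}.

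Your second step, however, has a genuine gap, and in fact the simplicity assertion is \emph{false} at the stated level of generality. Take $V=M(1)$, $g=\theta$, and let $M=M(1)(\theta)$ be the basic twisted Fock module (the case $\boldsymbol{\lambda}=0$ of the paper's $M(1,\boldsymbol{\lambda})(\theta)$). This is a simple weak $\theta$-twisted $M(1)$-module, yet over $M(1)^+$ it splits as $M(1)(\theta)^+\oplus M(1)(\theta)^-$ according to the parity of the number of creation operators $h_i(-n)$, and both summands are nonzero proper $M(1)^+$-submodules (indeed, they appear separately in the Dong--Nagatomo classification \cite{DN}). More generally, whenever a simple $g$-twisted $V$-module $M$ admits a compatible action of $g$, the $g$-eigenspaces of $M$ are proper $V^g$-submodules.

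Your proposed Jacobi-identity argument cannot avoid this. Pairing $a\in V^r$ with $b\in V^{T-r}$ and extracting coefficients yields identities of the form
\[
\sum_i (-1)^i\binom{k}{i}\big(a_{m+k-i}b_{n+i}-(-1)^k b_{n+k-i}a_{m+i}\big)
=\sum_i \binom{m}{i}(a_{k+i}b)_{m+n-i},
\]
so the right-hand side lies in the span of modes of elements of $V^0$ (which preserve $W$), but the left-hand side is bilinear in modes from $V^r$ and $V^{T-r}$: you never isolate a single operator $a_n$ with $a\in V^r$, and by the counterexample above you cannot. What is actually true from \cite{FLM} is only the restriction statement (your first step); the simplicity of the specific modules $M(1,\boldsymbol{\lambda})$ and $M(1,\boldsymbol{\lambda})(\theta)$ for $\boldsymbol{\lambda}\neq 0$ is established in the paper by the separate, direct argument of Theorem~\ref{thm:simplicity}, not by invoking this proposition.
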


Let
\begin{equation}
\hat{\mathfrak{h}}[-1] = \mathfrak{h}\otimes t^{1/2}\C[t,t^{-1}]\oplus \C K
\end{equation}
with Lie algebra bracket
\begin{equation}
[K,\hat{\mathfrak{h}}[-1]]=0, \qquad
[a(m),b(n)]=m(a,b)\delta_{m+n,0}K, \qquad
a,b\in\mathfrak{h},\, m,n\in\frac{1}{2}+\Z,
\end{equation}
where $a(n)=a\otimes t^n$ for $a\in\mathfrak{h}$, $n\in\frac{1}{2}+\Z$.

Let $\boldsymbol{\la}=(\la_{1/2},\la_{3/2},\ldots)$ be a sequence of elements of $\mathfrak{h}$ such that $\lambda_n=0$ for $n\gg 0$, and
let $\C e^{\boldsymbol{\la}}$ be the one-dimensional $U(\mathfrak{h}\otimes t^{1/2}\C[t]\oplus\C K)$-module given by
\begin{align}
a(n) e^{\boldsymbol{\la}} &= (\la_n,a) e^{\boldsymbol{\la}}, \qquad a\in\mathfrak{h},\, n=\frac{1}{2},\frac{3}{2},\ldots\\
K e^{\boldsymbol{\la}} &= e^{\boldsymbol{\la}}.
\end{align}
Define the vector space $M(1,\boldsymbol{\la})(\theta)$ by
\begin{equation}
M(1,\boldsymbol{\la})(\theta) = U(\hat{\mathfrak{h}}[-1])\otimes_{U(\mathfrak{h}\otimes t^{1/2}\C[t]\oplus\C K)} \C e^{\boldsymbol{\la}}
\end{equation}
By the PBW theorem, $M(1,\boldsymbol{\la})(\theta)$ has a basis
\begin{equation}
\left\{h_{i_1}(-n_1)h_{i_2}(-n_2)\cdots h_{i_r}(-n_r)e^{\boldsymbol{\la}}\mid r\in\N, n_1\ge n_2\ge\cdots\ge n_r, i_j=1,2\ldots,\ell \right\}.
\end{equation}

Then $M(1,\boldsymbol{\la})(\theta)$ has the structure of a weak $\theta$-twisted $M(1)$-module as follows \cite{FLM}.
For $a\in\mathfrak{h}$, put
\begin{equation}
a(z)=\sum_{n\in \frac{1}{2}+\Z} a(n)z^{-n-1}
\end{equation}
and for $u=a_1(-n_1)a_2(-n_2)\cdots a_r(-n_r)e^{0} \in M(1)$, define

\begin{equation}
Y_0(u,z)=\nord \frac{1}{(n_1-1)!}\left(\frac{d}{dz}\right)^{n_1-1}\left(a_1(z)\right) \cdots \frac{1}{(n_r-1)!}\left(\frac{d}{dz}\right)^{n_r-1}\left(a_r(z)\right)\nord
\end{equation}
Let $c_{mn}\in\Q$ for $m,n\in\N$ be given by
\begin{equation}
\sum_{m,n=0}^\infty c_{mn}z^mw^n = -\log \frac{(1+z)^{1/2}+(1+w)^{1/2}}{2}
\end{equation}
and set
\begin{equation}
\Delta_z = \sum_{i=1}^\ell\sum_{m,n=0}^\infty c_{mn} h_i(m)h_i(n)z^{-m-n},
\end{equation}

\begin{equation}
Y_{M(1,\boldsymbol{\la})(\theta)}(u,z) = Y_0(e^{\Delta_z} u,z).
\end{equation}

\section{Bosonic Fock space realization of $M(1,\boldsymbol{\la})$ and $M(1,\boldsymbol{\la})(\theta)$}

\subsection{Untwisted case}

It will be useful to study the $M(1)^+$-module $M(1,\boldsymbol{\lambda})$ using a Whittaker analog of the bosonic Fock representation.
First, since $h_i(-m)$ and $h_j(-n)$ commute for all $i,j\in\{1,2,\ldots,\ell\}$ and all positive integers $m$ and $n$, there exists a $U(\widehat{\mathfrak{h}}_-)$-module isomorphism
\begin{equation}
\begin{aligned}
\varphi:M(1,\boldsymbol{\lambda}) &\to B=\C[x_{in}\mid i=1,2,\ldots,\ell;\; n=1,2,\ldots] \\
h_{i_1}(-n_1)h_{i_2}(-n_2)\cdots h_{i_k}(-n_k)e^{\boldsymbol{\lambda}} &\mapsto x_{i_1n_1}x_{i_2n_2}\cdots x_{i_kn_k}
\end{aligned}
\end{equation}
where $U(\widehat{\mathfrak{h}}_-)$ acts on $B$ via $h_i(-n).f = x_{in}f$.
Then there is a unique action of $U(\widehat{\mathfrak{h}})$ on $B$ such that $\varphi$ is a $U(\widehat{\mathfrak{h}})$-module isomorphism. Explicitly, it is given by

\begin{equation}
\left\{
\begin{aligned}
h_i(-n) f &= x_{in}f\qquad i=1,2,\ldots,\ell;\; n=1,2,\ldots \\
h_i(n) f &= \big(\partial_{in} + (\la_n, h_i)1\big)(f)\qquad i=1,2,\ldots,\ell;\; n=0,1,2,\ldots
\end{aligned}\right.
\end{equation}
for all $f\in B$, where $\partial_{in}=n\frac{\partial}{\partial x_{in}}$.
We denote $B$ by $B_{\boldsymbol{\la}}$ when regarded as a $U(\widehat{\mathfrak{h}})$-module in this way. In particular we have
\begin{equation}\label{eq:M1plus-action-on-B}
h_i(m)h_j(n)f = \big(\partial_{im}+(\la_m,h_i)1\big)\big(\partial_{jn}+(\la_n,h_j)1\big)(f)
\end{equation}
or, equivalently,
\begin{equation} \label{eq:himhjn-identity}
\big(h_i(m)h_j(n)-(\la_m,h_i)(\la_n,h_j)\big)f = (\la_m,h_i)\partial_{jn}(f) + (\la_n,h_j)\partial_{im}(f) + \partial_{im}\partial_{jn}(f)
\end{equation}
for all $f\in B_{\boldsymbol{\lambda}}$, $i,j=1,2,\ldots,\ell$ and positive integers $m$ and $n$.

Let $U(\widehat{\mathfrak{h}})^+$ be the subalgebra of $U(\widehat{\mathfrak{h}})$ generated by all quadratic elements $h_i(m)h_j(n)$ where $i,j=1,2,\ldots,\ell$ and $m,n\in\Z$.

\begin{Lemma} \label{lem:even-subalg}
$B_{\boldsymbol{\lambda}}$ is a simple $U(\widehat{\mathfrak{h}})^+$-module if and only if the weak $M(1)^+$-module $M(1,\boldsymbol{\lambda})$ is simple.
\end{Lemma}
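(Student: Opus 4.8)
\emph{Proof plan.} I would prove the sharper statement that, for every subspace $N\subseteq M(1,\boldsymbol{\lambda})$, $N$ is a weak $M(1)^+$-submodule of $M(1,\boldsymbol{\lambda})$ if and only if $\varphi(N)$ is a $U(\widehat{\mathfrak{h}})^+$-submodule of $B_{\boldsymbol{\lambda}}$. Since $\varphi$ is a linear isomorphism sending $0$ to $0$ and $M(1,\boldsymbol{\lambda})$ to $B_{\boldsymbol{\lambda}}$, this gives the lemma at once: one module possesses a nontrivial submodule of the relevant kind precisely when the other does.

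One implication is soft. Suppose $\varphi(N)$ is a $U(\widehat{\mathfrak{h}})^+$-submodule. The space $M(1)^+$ is spanned by $\boldsymbol{1}$ together with the monomials $h_{i_1}(-n_1)\cdots h_{i_k}(-n_k)\boldsymbol{1}$ of \emph{even} length $k$. For such a $v$ the field $Y(v,z)$ is a normally ordered product of $k$ fields, each obtained from one $h_{i_j}(z)$ by differentiation, so every mode $v_n$ acts on $M(1,\boldsymbol{\lambda})$ as a locally finite sum of operators $\nord h_{i_1}(m_1)\cdots h_{i_k}(m_k)\nord$ with $k$ even. Whatever order normal ordering puts its factors in, such an operator is a product of $k/2$ of the quadratic generators $h_i(m)h_j(n)$ of $U(\widehat{\mathfrak{h}})^+$, hence lies in $U(\widehat{\mathfrak{h}})^+$; therefore $v_n$ maps $N$ into $N$, and $N$ is a weak $M(1)^+$-submodule.

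The reverse implication is the substantive part. Let $N$ be a weak $M(1)^+$-submodule, and fix $a,b\in\{1,\dots,\ell\}$ and $s\in\Z$. For every integer $m'\ge1$ the vector $h_a(-m')h_b(-1)\boldsymbol{1}$ has even length, hence lies in $M(1)^+$, and expanding
\[
Y\big(h_a(-m')h_b(-1)\boldsymbol{1},z\big)=\nord\Big(\tfrac{1}{(m'-1)!}\big(\tfrac{d}{dz}\big)^{m'-1}h_a(z)\Big)h_b(z)\nord
\]
into modes gives the operator identity (on $M(1,\boldsymbol{\lambda})$, and hence, via the equivariant map $\varphi$, on $B_{\boldsymbol{\lambda}}$)
\[
\big(h_a(-m')h_b(-1)\boldsymbol{1}\big)_{s+m'}=\sum_{k\in\Z}\binom{-k-1}{m'-1}\,\nord h_a(k)\,h_b(s-k)\nord .
\]
The left-hand side preserves $\varphi(N)$ because $N$ is a weak $M(1)^+$-submodule; since the polynomials $k\mapsto\binom{-k-1}{m'-1}$, $m'\ge1$, form a basis of $\C[k]$, it follows that $\sum_{k\in\Z}p(k)\,\nord h_a(k)\,h_b(s-k)\nord$ preserves $\varphi(N)$ for every polynomial $p$. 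Now fix $f\in\varphi(N)$. By the explicit action on $B_{\boldsymbol{\lambda}}$ and the Whittaker condition $\lambda_n=0$ for $n\gg0$, the operator $h_i(n)$ equals $\partial_{in}$ for $n\gg0$ and equals multiplication by $x_{i,-n}$ for $n\ll0$; hence $\nord h_a(k)\,h_b(s-k)\nord f=0$ for all but finitely many $k$. A Lagrange interpolation argument then isolates the surviving terms, yielding $\nord h_a(k)\,h_b(s-k)\nord f\in\varphi(N)$ for every $k$. Finally $\nord h_a(k)\,h_b(s-k)\nord$ differs from $h_a(k)h_b(s-k)$ by a scalar operator, and $f\in\varphi(N)$, so $h_a(k)h_b(s-k)f\in\varphi(N)$. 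As $a,b,s,f$ were arbitrary, $\varphi(N)$ is stable under every quadratic generator of $U(\widehat{\mathfrak{h}})^+$, hence is a $U(\widehat{\mathfrak{h}})^+$-submodule.

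I expect the only delicate point to be this last implication, and within it the passage from the ``diagonal'' operators $\sum_k p(k)\,\nord h_a(k)\,h_b(s-k)\nord$ — all that the $M(1)^+$-modes supply directly — to the individual products $h_a(k)h_b(l)$. That separation rests on the local finiteness of these sums on $B_{\boldsymbol{\lambda}}$, which is precisely where the Whittaker hypothesis $\lambda_n=0$ for $n\gg0$ enters. The remaining steps — writing down the fields $Y(v,z)$, expanding them into modes, and tracking the scalar discrepancy between $\nord\,\cdot\,\nord$ and ordinary products — are routine.
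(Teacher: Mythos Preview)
Your proposal is correct and follows essentially the same approach as the paper: express the modes of the quadratic states in $M(1)^+$ as finite linear combinations (with binomial-coefficient weights) of normally ordered Heisenberg quadratics acting on a fixed vector, and then invert. The paper carries out the inversion using the two-parameter family $h_a(-p-1)h_b(-q-1)\boldsymbol{1}$ and cites \cite[Lemma~2.3]{T1} for the invertibility of the resulting coefficient matrix, whereas your one-parameter family $h_a(-m')h_b(-1)\boldsymbol{1}$ together with Lagrange interpolation (using that the $\binom{-k-1}{m'-1}$ span $\C[k]$) reaches the same conclusion a little more self-containedly.
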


\begin{proof}
Let $u\in M(1,\boldsymbol{\la})$ and let $m\in\N$ such that $h(i)u=0$ for all $h\in\Fh$ and all $i>m$.
Then, as in \cite{T1}, we have the identity
\begin{equation} \label{eq:binom1}
\begin{aligned}
&\big(h_a(-p-1)h_b(-q-1)\mathbf{1}\big)_{n+1}\,u \\
&\quad = \sum_{\substack{i+j\ge 0\\ i+j=2m-n}} \binom{i-m-1}{p}\binom{j-m-1}{q} \nord h_a(-i+m)h_b(-j+m)\nord \,u
\end{aligned}
\end{equation}
which holds for all non-negative integers $p,q$ and all $a,b\in\{1,2,\ldots,\ell\}$.
This shows that if $B'$ is a nonzero proper $U(\widehat{\mathfrak{h}})^+$-submodule of $B_{\boldsymbol{\lambda}}$, then $\varphi^{-1}(B')$ is a nonzero proper $M(1)^+$-submodule of $M(1,\boldsymbol{\la})$.

Conversely, let $W$ be any nonzero proper $M(1)^+$-submodule of $M(1,\boldsymbol{\la})$.
By \cite[Lemma 2.3]{T1} the coefficient matrix in \eqref{eq:binom1} is invertible and therefore $U(\widehat{\mathfrak{h}})^+\varphi(W)\subseteq \varphi(W)$ which shows that $\varphi(W)$ is a nonzero proper $U(\widehat{\mathfrak{h}})^+$-submodule of $B_{\boldsymbol{\lambda}}$.
\end{proof}

\subsection{Twisted case}

Analogously to the untwisted case, we find a bosonic fock space realization of the $U(\hat{\mathfrak{h}}[-1])$-module $M(1,\boldsymbol{\la})(\theta)$.

For $\boldsymbol{\la}=(\la_{1/2},\la_{3/2},\ldots)$, define
\begin{equation}
B_{\boldsymbol{\la}}^{\theta}=\C[x_{in}\mid i=1,2,\ldots,\ell; n\in\frac{1}{2}+\Z]
\end{equation}
equipped with the $U(\hat{\mathfrak{h}}[-1])$-module action
\begin{align}
Kf&=f, \\
h_i(-n)f &= x_{in}f, \\
h_i(n)f &= \left(\partial_{in}+(\la_n,h_i)1\right)(f),
\end{align}
for $i=1,2,\ldots,\ell$ and $n=\frac{1}{2},\frac{3}{2},\ldots$.

Then the linear map
\begin{equation}
\begin{aligned}
\psi:M(1,\boldsymbol{\la})(\theta) &\to B_{\boldsymbol{\la}}^{\theta} \\
h_{j_1}(-m_1)\cdots h_{j_r}(-m_r)e^{\boldsymbol{\la}} &\mapsto x_{j_1m_1}\cdots x_{j_rm_r}
\end{aligned}
\end{equation}
is an isomorphism of $U(\hat{\mathfrak{h}}[-1])$-modules.

In particular we have the identity

\begin{equation}
\left(h_i(m)h_j(n)-(\la_m,j_i)(\la_n,h_j)1\right)(f)=
\left(\partial_{im}\partial_{jm}+(\la_m,h_i)\partial_{jn}+(\la_n,h_j)\partial_{im}\right)(f)
\end{equation}

for $i,j\in\{1,2,\ldots,\ell\}$ and $m,n\in\frac{1}{2}+\Z$ and $f\in M(1,\boldsymbol{\la})(\theta)\cong B_{\boldsymbol{\la}}^{\theta}$.

Let $\nu$ be the involution on the Lie algebra $\hat{\mathfrak{h}}[-1]$ given by negation, and let $U(\hat{\mathfrak{h}}[-1])^+$ denote the corresponding fixed-point subalgebra.

\begin{Lemma}\label{lem:even-subalg-theta}
For any $\boldsymbol{\la}=(\la_{1/2},\la_{3/2},\ldots)$, the weak $M(1)^+$-module $M(1,\boldsymbol{\la})(\theta)$ is simple if and only if $B_{\boldsymbol{\la}}^{\theta}$ is an simple $U(\hat{\mathfrak{h}}[-1])^+$-module.
\end{Lemma}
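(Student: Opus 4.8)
The plan is to mimic the proof of Lemma~\ref{lem:even-subalg}, replacing the untwisted vertex-operator computation by its $\theta$-twisted analog. Concretely, the statement will follow once I establish the following twisted counterpart of the key identity \eqref{eq:binom1}: for $u\in M(1,\boldsymbol{\la})(\theta)$ there is an $m$ (depending on $u$) with $h(n)u=0$ for all $h\in\Fh$ and all half-integers $n>m$, and for non-negative integers $p,q$ and $a,b\in\{1,2,\ldots,\ell\}$ the coefficient of an appropriate negative power of $z$ in $Y_{M(1,\boldsymbol{\la})(\theta)}\big(h_a(-p-1)h_b(-q-1)\mathbf 1,z\big)u$ equals a finite linear combination, with explicit binomial-type coefficients, of normally ordered quadratics $\nord h_a(i)h_b(j)\nord u$ with $i+j$ fixed. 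The only new feature compared to \cite{T1} is that the fields $a(z)$ now have half-integer mode expansions and that $Y_{M(1,\boldsymbol{\la})(\theta)}=Y_0(e^{\Delta_z}\,\cdot\,,z)$ carries the extra factor $e^{\Delta_z}$; but $\Delta_z$ is itself a series in the quadratic elements $h_i(m)h_i(n)$ with $m,n\ge 0$, so it does not take us outside $U(\hat{\mathfrak h}[-1])^+$, and on a fixed vector $u$ only finitely many terms of $e^{\Delta_z}$ act nontrivially. Thus the coefficients will again assemble into a matrix that one must show is invertible.

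The steps, in order, would be: (1) record the explicit formula for $Y_{M(1,\boldsymbol{\la})(\theta)}(h_a(-p-1)h_b(-q-1)\mathbf 1,z)$ by expanding $Y_0$ of the normally ordered product of $\frac1{p!}(\tfrac{d}{dz})^p a(z)$ and $\frac1{q!}(\tfrac{d}{dz})^q b(z)$ and then inserting $e^{\Delta_z}$; (2) apply this operator to $u$, use the truncation $h(n)u=0$ for $n\gg0$ to see that only finitely many modes contribute and that the result is a (finite) linear combination of $\nord h_a(i)h_b(j)\nord u$; (3) identify the precise coefficients — these are products of binomial coefficients coming from the derivatives $(\tfrac{d}{dz})^p,(\tfrac{d}{dz})^q$, exactly as in \eqref{eq:binom1}, possibly shifted by the half-integer grading, together with the (lower-triangular, unipotent-type) contribution of $e^{\Delta_z}$ which does not affect invertibility; (4) invoke the invertibility of the resulting coefficient matrix — here I would cite \cite[Lemma 2.3]{T1} if the matrix is literally the same after reindexing $\tfrac12+\Z$ by $\Z$, or otherwise give the short Vandermonde/triangularity argument; (5) conclude exactly as in Lemma~\ref{lem:even-subalg}: a nonzero proper $U(\hat{\mathfrak h}[-1])^+$-submodule $B'$ of $B_{\boldsymbol\la}^\theta$ gives, via $\psi^{-1}$, a nonzero proper $M(1)^+$-submodule (since $M(1)^+$ acts through $U(\hat{\mathfrak h}[-1])^+$ by step (2), using that $M(1)^+$ is generated by quadratic-type elements, Proposition~4.6), and conversely a nonzero proper $M(1)^+$-submodule $W$ has $\psi(W)$ stable under $U(\hat{\mathfrak h}[-1])^+$ because the invertible coefficient matrix lets us solve for each $\nord h_a(i)h_b(j)\nord$ in terms of the operators $\big(h_a(-p-1)h_b(-q-1)\mathbf 1\big)_{n+1}\in M(1)^+$.

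The main obstacle I anticipate is step (3)–(4): keeping track of the correction term $e^{\Delta_z}$ and verifying that, after it is included, the matrix relating the $M(1)^+$-operators $\big(h_a(-p-1)h_b(-q-1)\mathbf 1\big)_{n+1}u$ to the normally ordered quadratics $\nord h_a(i)h_b(j)\nord u$ is still invertible. Morally this should be clear because $\Delta_z$ contributes only terms $h_i(m)h_i(n)u$ with $m,n\ge 0$, which after applying to $u$ are lower order in a suitable degree filtration, so $e^{\Delta_z}$ acts as "identity plus strictly lower order" and cannot destroy invertibility; but making this filtration argument precise — especially because $h(n)u\ne 0$ for some small positive $n$ in the genuinely Whittaker (non-highest-weight) case — is the step that requires care rather than a direct appeal to \cite{T1}. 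One also needs the elementary observation, as in the untwisted Lemma, that $M(1)^+\subseteq M(1)$ acts on $M(1,\boldsymbol\la)(\theta)$ through $U(\hat{\mathfrak h}[-1])^+$, which is immediate from the definition of $Y_{M(1,\boldsymbol\la)(\theta)}$ and the generation statement in Proposition~4.6. Everything else is bookkeeping parallel to the proof already given.
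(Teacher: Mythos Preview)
Your plan is correct and follows the same overall strategy as the paper: mimic the proof of Lemma~\ref{lem:even-subalg} in the twisted setting. The paper's own proof is literally the one line ``Similar to the proof of Lemma~\ref{lem:even-subalg}, using the twisted version of Borcherds identity,'' so you have supplied far more detail than the authors do.

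There is one tactical difference worth pointing out. You propose to obtain the twisted analog of \eqref{eq:binom1} by unpacking the explicit vertex operator formula $Y_{M(1,\boldsymbol{\la})(\theta)}(u,z)=Y_0(e^{\Delta_z}u,z)$, which forces you to track the $e^{\Delta_z}$ correction and argue that it is ``identity plus lower order'' in a suitable filtration. The paper instead appeals directly to the twisted Borcherds (iterate/associativity) identity extracted from the twisted Jacobi identity of Definition~4.1(iii); applied to $h_a(-p-1)h_b(-q-1)\mathbf 1\in M(1)^+=V^0$, this immediately produces an expansion in normally ordered quadratics $\nord h_a(i)h_b(j)\nord$ with $i,j\in\frac12+\Z$ and the same binomial coefficients as in \eqref{eq:binom1}, so the invertibility argument from \cite[Lemma~2.3]{T1} carries over after reindexing. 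In that route the $e^{\Delta_z}$ bookkeeping never appears---it is already absorbed into the twisted Jacobi identity---so your ``main obstacle'' in steps (3)--(4) simply evaporates. Your approach still works (your observation that $e^{\Delta_z}$ sends the quadratic state to itself plus scalar multiples of $\mathbf 1$, and that $Y_0$ of everything lands in $U(\hat{\mathfrak h}[-1])^+$, is correct), but invoking twisted Borcherds is the cleaner way to get there. One small imprecision: $\Delta_z$ acts on the \emph{state} in $M(1)$, not on the module, so the phrase ``does not take us outside $U(\hat{\mathfrak h}[-1])^+$'' should be read as ``the resulting vertex operator still has all modes in $U(\hat{\mathfrak h}[-1])^+$.''
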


\begin{proof}
Similar to the proof of Lemma \ref{lem:even-subalg}, using the twisted version of Borcherds identity.
\end{proof}

\section{Main theorem: Simplicity of $M(1,\boldsymbol{\lambda})$ and $M(1,\boldsymbol{\lambda})(\theta)$ as weak $M(1)^+$-modules.}

We now prove that the weak $M(1)^+$-modules $M(1,\boldsymbol{\lambda})$
and $M(1,\boldsymbol{\la})(\theta)$
are simple for any nonzero
$\boldsymbol{\lambda}$.

\begin{Theorem} \label{thm:simplicity}
For any non-negative integer $r$ and any sequence $\boldsymbol{\la}=(\la_0,\la_1,\ldots,\la_r,0,0,\ldots)$ (respectively $\boldsymbol{\la}=(\la_{1/2},\la_{3/2},\ldots,\la_{r+1/2},0,0,\ldots)$) of elements of $\mathfrak{h}$ with $\la_r\neq 0$ (respectively $\la_{r+1/2}\neq 0$), the weak $M(1)^+$-module $M(1,\boldsymbol{\lambda})$ (respectively $M(1,\boldsymbol{\la})(\theta)$) is simple.
\end{Theorem}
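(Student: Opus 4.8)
The plan is to reduce the simplicity of the weak $M(1)^+$-module to a purely Lie-algebraic statement about the polynomial algebra $B_{\boldsymbol{\lambda}}$ (resp. $B_{\boldsymbol{\lambda}}^\theta$), using Lemma~\ref{lem:even-subalg} (resp. Lemma~\ref{lem:even-subalg-theta}). Thus it suffices to show that $B_{\boldsymbol{\lambda}}$ is a simple module over the subalgebra $U(\widehat{\mathfrak{h}})^+$ generated by the quadratic elements $h_i(m)h_j(n)$. Concretely, I would fix a nonzero $U(\widehat{\mathfrak{h}})^+$-submodule $W\subseteq B_{\boldsymbol{\lambda}}$ and prove $W=B_{\boldsymbol{\lambda}}$ by a two-step argument: first show $1\in W$, then show $U(\widehat{\mathfrak{h}})^+\cdot 1 = B_{\boldsymbol{\lambda}}$.

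For the first step, take $0\neq f\in W$ of minimal total degree $d$ in the variables $x_{in}$. The key tool is the operator identity \eqref{eq:himhjn-identity}, which says that modulo lower-degree terms $h_i(m)h_j(n)$ acts on $B_{\boldsymbol{\lambda}}$ as the second-order operator $\partial_{im}\partial_{jn}$ plus first-order terms $(\la_m,h_i)\partial_{jn}+(\la_n,h_j)\partial_{im}$ (with $\la_m$ possibly zero for large $m$). I would use these operators, and in particular the ``pure lowering'' operators coming from indices $m,n$ large enough that $\la_m=\la_n=0$, to strictly decrease the degree of $f$: since $r$ is finite and $\la_r\neq 0$, for $m$ large we have $h_i(m)h_j(n)f = \partial_{im}\partial_{jn}f$, which is homogeneous of degree $d-2$; choosing $m,n$ so that $\partial_{im}\partial_{jn}f\neq 0$ forces $d\le 1$ by minimality, and then a similar argument using first-order operators $h_i(m)h_j(r)$ with $(\la_r,h_j)\neq 0$ kills the remaining linear term, giving $d=0$, i.e. $W\ni 1$ after normalizing. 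The nonvanishing $\partial_{im}\partial_{jn}f\neq 0$ for some choice of indices needs care since $f$ might be, say, a polynomial only in variables with small second index; here one uses that $f$ has positive degree, so it genuinely involves some variable $x_{jn_0}$, and then pairs the lowering operator on that variable with a second lowering index chosen large — but one must handle the subtlety that a single monomial $x_{jn_0}$ requires $d=1$ and only a first-order operator applies. This interplay between the Whittaker part (finitely many nonzero $\la_n$) and the infinitely many commuting ``free'' directions is what makes the argument work, and mirrors Tanabe's rank-one computation but must be organized so it is uniform in $\ell$.

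For the second step, I would show that starting from $1$ one can build every monomial. Applying $h_i(-m)h_j(-n)$ to a polynomial multiplies it by $x_{im}x_{jn}$ (these operators lie in $U(\widehat{\mathfrak{h}})^+$ and act with no lower-order corrections, since $\widehat{\mathfrak{h}}_-$ is abelian and annihilates nothing in the relevant way); hence from $1\in W$ we obtain all monomials of even total degree, and to reach odd degree we use a single operator of the form $h_i(-m)h_j(n)$ applied judiciously — e.g. $h_i(-m)h_j(n)\cdot 1 = (\la_n,h_j)x_{im}$ when $(\la_n,h_j)\neq 0$, which is available because $\boldsymbol{\lambda}\neq 0$. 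Combining, $W$ contains every monomial, so $W = B_{\boldsymbol{\lambda}}$. The twisted case $M(1,\boldsymbol{\lambda})(\theta)$ is handled identically using $B_{\boldsymbol{\lambda}}^\theta$, the corresponding quadratic operator identity, and Lemma~\ref{lem:even-subalg-theta}; the only difference is that the index set for $n$ is $\tfrac12+\Z_{\ge 0}$, which changes nothing structurally.

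The main obstacle I anticipate is the degree-reduction step, specifically verifying that for any nonzero homogeneous $f$ of degree $d\ge 2$ there exist indices with $\partial_{im}\partial_{jn}f\neq 0$ \emph{and} $\la_m=\la_n=0$ simultaneously, so that the relevant quadratic operator acts as a pure degree-lowering operator landing inside $W$; and dually, the delicate bookkeeping at degrees $d=1$ and $d=2$, where first-order Whittaker terms interfere. A clean way around this is to first apply the purely-lowering operators $h_i(-M)h_j(-M)$ with $M$ huge to move $f$ into a ``generic'' position (so that it involves a variable $x_{jn}$ with $n$ arbitrarily large, hence with $\la_n=0$), and only then perform the degree reduction; this separates the combinatorial difficulty from the Whittaker complication. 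Once $1\in W$ is established, the rest is routine.
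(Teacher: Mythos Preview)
Your overall architecture is correct and matches the paper: reduce via Lemma~\ref{lem:even-subalg} (and its twisted analogue) to showing $B_{\boldsymbol{\lambda}}$ is a simple $U(\widehat{\mathfrak{h}})^+$-module, then show every nonzero submodule contains $1$, and finally that $1$ is cyclic. Your cyclicity argument (even monomials via $h_i(-m)h_j(-n)$, odd monomials via $h_i(-m)h_j(n)$ with $(\la_n,h_j)\neq 0$) is fine, and in fact the paper leaves this step implicit.

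The gap is in your degree-reduction step. Your plan is to use ``pure lowering'' operators $h_i(m)h_j(n)$ with $m,n>r$ (so that $\la_m=\la_n=0$ and the operator acts as $\partial_{im}\partial_{jn}$) to force a minimal $f$ down to degree $\le 1$. But this fails precisely when $f$ involves only variables $x_{in}$ with $n\le r$: then $\partial_{im}\partial_{jn}f=0$ for all $m,n>r$, and you get no reduction. Your proposed workaround---first multiply by $x_{iM}x_{jM}$ with $M\gg r$, then lower---does not help: applying $h_i(M)h_j(M)$ to $x_{iM}x_{jM}f$ returns a scalar multiple of $f$, and more generally one checks that any sequence of $U^+$-operators that removes the fresh $x_{iM}$-factors simply reproduces an element of $U^+ f$ that was already available without the detour. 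So the trick yields no new elements of $W$ and cannot beat the minimality of $\deg f$.

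The paper's proof avoids this by never trying to isolate the second-order term. It uses the weight grading $\deg x_{in}=n$, fixes $m$ to be the \emph{smallest} positive index with $\partial_{i_0m}(a)\neq 0$ for some $i_0$, and then chooses any $n$ with $\la_n\neq 0$ and $j_0$ with $(\la_n,h_{j_0})\neq 0$. The point is that in the expansion
\[
\big(h_{i_0}(m)h_{j_0}(n)-(\la_m,h_{i_0})(\la_n,h_{j_0})\big)a
=(\la_m,h_{i_0})\partial_{j_0n}(a)+(\la_n,h_{j_0})\partial_{i_0m}(a)+\partial_{i_0m}\partial_{j_0n}(a),
\]
the term $(\la_n,h_{j_0})\partial_{i_0m}(a)$ is nonzero of weight degree $d-m$, and a short case analysis ($n>m$, $n<m$, $n=m$) shows it is always the top-degree term. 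Thus the first-order Whittaker contribution, not the second-order one, drives the induction. Your sketch does mention ``first-order operators $h_i(m)h_j(r)$'' for the $d=1$ endgame; the fix is to promote this to the main mechanism at every degree, as the paper does.
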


\begin{proof}
We treat both cases simultaneously.
Let $B=B_{\boldsymbol{\la}}$ (respectively $B=B_{\boldsymbol{\la}}^{\theta}$) and $U^+=U(\hat{\mathfrak{h}})^+$ (respectively $U^+=U(\hat{\mathfrak{h}}[-1])^+$).
Put $I=\{1,2,\ldots,\ell\}$ where $\ell=\dim\mathfrak{h}$.
By Lemmas \ref{lem:even-subalg} and \ref{lem:even-subalg-theta}, it suffices to prove that $B$ is a simple $U^+$-module.
We equip $B$ with the usual weight gradation where $\deg x_{in}=n$.
When $\partial_{im}$ acts on any monomial in the variables $\{x_{jn}\}$, the result is either zero or the degree has decreased by $m$.
Thus for any $a\in B$,
\begin{equation}
\deg \partial_{im}(a)\le\deg(a)-m
\end{equation}
with equality if and only if $x_{im}$ occurs in some leading monomial of $a$ ($\deg 0=-\infty$).

Let $a\in B$ be a nonzero arbitrary polynomial. Let $d=\deg a$.
We prove that $1\in U^+ a$ by induction on $d$.
It suffices to show that there exists at least one leading term in $a$ that is mapped to something nonzero under the differential operator. So we may without loss of generality assume that $a$ is a sum of monomials of the same degree, $d$.
Any set of distinct monomials, each containing the variable $x_{in}$, remains linearly independent when acted upon by $\partial_{in}$.

If $d=0$ then this is trivial since then $a$ is a nonzero complex number.

If $d>0$ then we show that there exists $(i,j,m,n)\in I\times I\times \Z_+\times\Z_+$ (respectively $(i,j,m,n)\in I\times I \times (\frac{1}{2}+\N)\times (\frac{1}{2}+\N)$)
such that
\begin{equation} \label{eq:pf1}
b=\big(h_i(m)h_j(n)-(\la_m,h_i)(\la_n,h_j)\big)a
\end{equation}
is nonzero. If we can show this, then since $\deg(b) < \deg(a)=d$ it follows that $1\in U^+ b$ by the induction hypothesis. Since
$h_i(m)h_j(n)-(\la_m,h_i)(\la_n,h_j)\in U^+$ we get
\[1\in U^+b \subseteq U^+ a\]
as desired.
To prove that we can find such $(i,j,m,n)$, note that by \eqref{eq:himhjn-identity}, the expression \eqref{eq:pf1} is equal to
\begin{equation}
(\la_m,h_i)\partial_{jn}(f)+(\la_n,h_j)\partial_{im}(f)+\partial_{im}\partial_{jn}(a).
\end{equation}
Since $a$ is a nonconstant polynomial in the variables $x_{im}$, there exists $(i,m)$ such that $\partial_{im}(a)\neq 0$.
Fix $m$ to be the smallest positive (half-)integer such that there exists $i_0\in I$ such that $\partial_{i_0m}(a)\neq 0$.
Furthermore, let $n$ be any non-negative (half-)integer such that $\la_n\neq 0$.
The case when $\la_n=0$ for all $n>0$ corresponds to the highest weight case, which is already known \cite[Proposition 2.2.2]{DN}. So we can assume that $n>0$.
We now consider three cases.

(Case 1): If $n>m$ then pick any $j_0\in I$ such that $(\la_n, h_{j_0})\neq 0$. This is possible since $\la_n\neq 0$, $h_j$ form a basis for $\mathfrak{h}$, and $(\cdot,\cdot)$ is non-degenerate.
Then with $(i,j,m,n)=(i_0,j_0,m,n)$ we get
\[(\la_n, h_{j_0}) \partial_{im}(a) + \text{lower degree terms}\]
which is has a nonzero leading term of degree $(\deg a) - m$.

(Case 2): If $n<m$ then again pick any $j_0\in I$ such that $(\la_n, h_{j_0})\neq 0$. Then with
 $(i,j,m,n)=(i_0,j_0,m,n)$ we get
\[(\la_n, h_{j_0}) \partial_{i_0m}(a) + (\la_m,h_i)\partial_{j_0n}(a) + \partial_{im}\partial_{j_0n}(a) = (\la_n, h_{j_0}) \partial_{i_0m}(a) + \partial_{im}\partial_{j_0n}(a)  \]
since $\partial_{j_0n}(a)=0$ by minimality of $m$. The term $\partial_{im}\partial_{j_0n}(a) $ has degree $(\deg a)-(n+m) < (\deg a)-m  = \deg \partial_{i_0m}(a)$, and the first term is nonzero by the choice of $(i_0,m)$ and since $(\la_n,h_{j_0})\neq 0$.

(Case 3): If $n=m$ there are two subcases:

(Case 3a) Suppose $(\la_m,h_{i_0})\neq 0$. Then take $(i,j,m,n)=(i_0,i_0,m,m)$ we get

\[2(\la_m,h_{i_0})\partial_{i_0m}(a) + \text{lower degree terms}\]

and the coefficient is nonzero.

(Case 3b) Suppose $(\la_m,h_{i_0})=0$. Then find any other $j_0\in I$ such that $(\la_m,h_{j_0})\neq 0$. This is possible since we are in the case where $m=n$ and $\la_n\neq 0$ by choice of $n$.
Then with $(i,j,m,n)=(i_0,j_0,m,n)$ we get
\[ (\la_m,h_{j_0})\partial_{i_0m}(a) + (\la_m,h_{i_0})\partial_{j_0m}(a) + \text{lower degree terms}=
(\la_m,h_{j_0})\partial_{i_0m}(a)  + \text{lower degree terms}\]
since $(\la_m,h_{i_0}) = 0$.

This covers all cases and proves the claim.
\end{proof}

\section{$M(1,\boldsymbol{\lambda})$ and $M(1,\boldsymbol{\lambda})(\theta)$ are $\omega$-Whittaker $M(1)^+$-modules}

\subsection{Definition of $\omega$-Whittaker modules}

There is no general definition of a Whittaker module for a vertex operator algebra. However, such a notion exists for Virasoro algebras. Following \cite{T1} we therefore make the following definitions.

\begin{Definition}
Let $V$ be a vertex operator algebra with conformal vector $\omega$ and let $M$ be a weak $V$-module.
\begin{enumerate}
\item A nonzero vector $v\in M$ is a \emph{Whittaker vector for $\omega$} if there exist a positive integer $r$ and a sequence of complex numbers $\zeta=(\zeta_{r+1},\zeta_{r+2},\ldots,\zeta_{2r+\epsilon})$ where $\epsilon\in\{0,1\}$ and $\zeta_{2r+\epsilon}\neq 0$ such that
\begin{equation}
\omega_i v = \begin{cases}
\zeta_i v,& j=r+1,\, r+2,\,\ldots,\, 2r+\epsilon\\
0,& i>2r+\epsilon.
\end{cases}
\end{equation}
The sequence $\zeta$ is called the \emph{type} of $v$.
\item We say that $M$ is an \emph{$\omega$-Whittaker module} if $M$ is generated as a weak $V$-module by a Whittaker vector for $\omega$.
\end{enumerate}
\end{Definition}

\subsection{$M(1,\boldsymbol{\lambda})$ and $M(1,\boldsymbol{\lambda})(\theta)$ are $\omega$-Whittaker $M(1)^+$-modules}

\begin{Proposition}
For any positive integer $r$ and any sequence $\boldsymbol{\la}=(\la_0,\la_1,\ldots,\la_r,0,0,\ldots)$ (respectively $\boldsymbol{\la}=(\la_{1/2},\la_{3/2},\ldots,\la_{r-1/2},0,0,\ldots)$) of elements of $\mathfrak{h}$ with $\la_r\neq 0$ (respectively $\la_{r-1/2}\neq 0$), the weak $M(1)^+$-module $M(1,\boldsymbol{\lambda})$ (respectively $M(1,\boldsymbol{\la})(\theta)$) is an $\omega$-Whittaker module.
\end{Proposition}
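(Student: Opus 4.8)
The plan is to exhibit the cyclic vector $e^{\boldsymbol{\lambda}}$ as a Whittaker vector for $\omega$ and then invoke Theorem~\ref{thm:simplicity}: since $M(1,\boldsymbol{\lambda})$ (respectively $M(1,\boldsymbol{\la})(\theta)$) is a simple weak $M(1)^+$-module, it is generated by any nonzero vector, in particular by a Whittaker vector for $\omega$, and hence is an $\omega$-Whittaker module.

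The main computation is the action of $\omega_i$ on $e^{\boldsymbol{\lambda}}$ for $i$ large. Since $Y(\omega,z)=\sum_n\omega_nz^{-n-1}=\sum_nL(n)z^{-n-2}$ one has $\omega_i=L(i-1)$. In the twisted case I would first record that, from the explicit series defining $\Delta_z$, $e^{\Delta_z}\omega=\omega+\tfrac{\ell}{16}z^{-2}\boldsymbol{1}$, so that $Y_{M(1,\boldsymbol{\la})(\theta)}(\omega,z)$ differs from the half-integer-mode Sugawara field only by the scalar $\tfrac{\ell}{16}z^{-2}$, which is invisible on $\omega_i$ for $i\ge2$. Then, using the formula for $L(n)$ in terms of the modes $h_a(m)$ recalled in Section~2 (respectively its half-integer-mode analogue) together with $h_a(k)e^{\boldsymbol{\lambda}}=(\lambda_k,h_a)e^{\boldsymbol{\lambda}}$ for $k\ge0$ and $\lambda_k=0$ beyond the support of $\boldsymbol{\lambda}$, I would check that once $n\ge r$ every summand of $L(n)e^{\boldsymbol{\lambda}}$ involving a creation mode $h_a(k)$ with $k<0$ (respectively $k<\tfrac12$) is annihilated, since the partner mode $n-k$ then lies beyond the support of $\boldsymbol{\lambda}$ and kills $e^{\boldsymbol{\lambda}}$. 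What survives is
\[
L(n)e^{\boldsymbol{\lambda}}=c_n\,e^{\boldsymbol{\lambda}},\qquad c_n=\tfrac12\sum_{j+k=n}(\lambda_j,\lambda_k)\quad(n\ge r),
\]
the sum running over $j,k$ in the support of $\boldsymbol{\lambda}$. Writing $f(t)=\sum_j\lambda_jt^j$ this says $\sum_n2c_nt^n=\big(f(t),f(t)\big)$, so $c_n=0$ for $n>2r$ (respectively $n>2r-1$) with top value $c_{2r}=\tfrac12(\lambda_r,\lambda_r)$ (respectively $c_{2r-1}=\tfrac12(\lambda_{r-1/2},\lambda_{r-1/2})$). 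Consequently $\omega_ie^{\boldsymbol{\lambda}}=c_{i-1}e^{\boldsymbol{\lambda}}$ for all $i\ge r+1$ and $\omega_ie^{\boldsymbol{\lambda}}=0$ for $i>2r+1$ (respectively $i>2r$); comparing with the definition, this shows $e^{\boldsymbol{\lambda}}$ is a Whittaker vector for $\omega$ of type $(c_r,c_{r+1},\dots,c_{2r+\epsilon-1})$ with $\epsilon=1$ (respectively $\epsilon=0$), provided the top coefficient is nonzero.

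I expect the nonvanishing of that top coefficient to be the one delicate point. Since $(\cdot,\cdot)$ is the complex bilinear form, $\lambda_r$ (respectively $\lambda_{r-1/2}$) can be nonzero and isotropic, in which case the displayed top coefficient vanishes and $e^{\boldsymbol{\lambda}}$ fails to have a Whittaker type of the expected length; one then reads off the honest degree $D=\deg\big(f(t),f(t)\big)$, and if $D\ge 2r-1$ the vector $e^{\boldsymbol{\lambda}}$ is still a Whittaker vector (of length $r$ or $r+1$, with $\epsilon$ determined by the parity of $D$), so the argument goes through unchanged, while in the remaining fully degenerate case one must instead replace $e^{\boldsymbol{\lambda}}$ by another cyclic vector on which $\omega$ acts in Whittaker fashion, and isolating that vector is the real work. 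Everything else --- the mode-by-mode cancellation in the formula for $L(n)e^{\boldsymbol{\lambda}}$, and the transcription to the $\theta$-twisted, half-integer-mode setting --- is routine.
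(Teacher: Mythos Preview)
Your approach is exactly the paper's: invoke Theorem~\ref{thm:simplicity} to reduce to showing that $e^{\boldsymbol{\lambda}}$ is a Whittaker vector for $\omega$, compute $\omega_j e^{\boldsymbol{\lambda}}$ for $j\ge r+1$ via the Sugawara expression (and its half-integer analogue, after observing $e^{\Delta_z}\omega=\omega+\text{const}\cdot z^{-2}\boldsymbol{1}$), and read off the type. The paper performs precisely this computation and concludes by writing $\omega_{2r+1}e^{\boldsymbol{\lambda}}=\tfrac12(\lambda_r,\lambda_r)e^{\boldsymbol{\lambda}}\neq 0$ in the untwisted case and $\omega_{2r}e^{\boldsymbol{\lambda}}=\tfrac12(\lambda_{r-1/2},\lambda_{r-1/2})e^{\boldsymbol{\lambda}}\neq 0$ in the twisted case, citing only $\lambda_r\neq 0$ (respectively $\lambda_{r-1/2}\neq 0$).

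The isotropy concern you raise is genuine, and the paper does not address it. For $\ell\ge 2$ the complexified bilinear form admits nonzero isotropic vectors, so $\lambda_r\neq 0$ does not imply $(\lambda_r,\lambda_r)\neq 0$; for instance with $\ell=2$, $r=1$, $\lambda_0=0$, $\lambda_1=h_1+ih_2$ one has $\omega_je^{\boldsymbol{\lambda}}=0$ for all $j\ge 2$ while $\omega_1e^{\boldsymbol{\lambda}}=L(0)e^{\boldsymbol{\lambda}}$ is not a scalar multiple of $e^{\boldsymbol{\lambda}}$, so $e^{\boldsymbol{\lambda}}$ is not a Whittaker vector in the sense of the definition. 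Your fallback analysis (passing to the honest degree $D$ of $(f(t),f(t))$, and in the fully degenerate case seeking another cyclic vector) already goes beyond what the paper does; the paper simply asserts the nonvanishing and moves on. So the routine part of your argument matches the paper line for line, and the ``delicate point'' you isolate is exactly the step the paper takes for granted.
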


\begin{proof}
By Theorem \ref{thm:simplicity} these $M(1)^+$-modules are simple, hence in particular generated by $e^{\boldsymbol{\la}}$. Therefore it suffices to show that, in both cases, $e^{\boldsymbol{\la}}$ is a Whittaker vector for $\omega$. We first consider the case of $M(1,\boldsymbol{\la})$.
Since $m+n\ge r$ implies $\min\{m,n\}\ge 0$ if $\max\{m,n\}\le r$ we have for all $j=r+1,r+2,\ldots,2r+1$,
\begin{align*}
\omega_j e^{\boldsymbol{\lambda}} &= \frac{1}{2}\sum_{\substack{m,n\in\Z\\ m+n=j-1}} \sum_i \nord h_i(m)h_i(n)\nord \, e^{\boldsymbol{\lambda}}  \nonumber \\
&=\frac{1}{2}\sum_{\substack{m,n\in\N\\ m+n=j-1}} (\la_m,\la_n) e^{\boldsymbol{\lambda}} \in \C e^{\boldsymbol{\lambda}}
\label{eq:omega_j-on-vacuum}
\end{align*}
where we used that $\sum_i(\la_m,h_i)(\la_n,h_i)=(\la_m,\la_n)$. Taking $j=2r+1$ there is only one nonzero term in the sum, $\omega_{2r+1}e^{\boldsymbol{\lambda}}=\frac{1}{2}(\la_r,\la_r)e^{\boldsymbol{\lambda}}\neq 0$.
This shows that $M(1,\boldsymbol{\lambda})$ is a $\omega$-Whittaker module for $M(1)^+$.

Now consider the weak $M(1)^+$-module
$M(1,\boldsymbol{\la})(\theta)$. The action of the conformal vector is given by \cite[Example 18.9]{KRR}:

\begin{equation}
L^{\textrm{tw}}(z) = \frac{1}{2}\sum_{i=1}^\ell \nord h\,_i(z)^2 \,\nord +\frac{1}{16}z^{-2}
\end{equation}

This shows that
\begin{equation}
Y_{M(1,\boldsymbol{\la})(\theta)} (\omega,z) =  \sum_{n\in\Z} \omega_n z^{-n-1}
\end{equation}
where
\begin{equation}
\omega_j = \frac{1}{2}\sum_{\substack{m,n\in\Z+\frac{1}{2}\\ m+n=j-1}} \left(\frac{1}{16}\delta_{j,1}+\sum_{i=1}^\ell \nord\, h_i(m)h_i(n)\,\nord\right)
\end{equation}
Suppose that $j\in\{r+1,r+2,\ldots,2r\}$.
Acting by $\omega_j$ on $e^{\boldsymbol{\lambda}}$ using that $\nord\, h_i(m)h_i(n)\,\nord e^{\boldsymbol{\lambda}}=0$ unless $\max\{m,n\}\le r-\frac{1}{2}$ in which case $\min\{m,n\}=(m+n)-\max\{m,n\}\ge r-(r-\frac{1}{2})=\frac{1}{2}$, we get
\begin{equation}
\omega_j e^{\boldsymbol{\lambda}} = \frac{1}{2}\sum_{\substack{m,n\in\N+\frac{1}{2}\\ m+n=j-1}} (\lambda_m,\lambda_n)e^{\boldsymbol{\lambda}}
\end{equation}
In particular for $j=2r$ we have $m+n=2r-1$ and $m,n\le r-\frac{1}{2}$ in nonzero terms hence $m=n=r-\frac{1}{2}$ which means that \[\omega_{2r}e^{\boldsymbol{\lambda}}=\frac{1}{2} (\la_{r-\frac{1}{2}},\la_{r-\frac{1}{2}})e^{\boldsymbol{\lambda}} \neq 0\]
since $\la_{r-1/2}\neq 0$.
This proves the claim.
\end{proof}

\begin{Remark}
Note that $e^{\boldsymbol{\lambda}}$ is an eigenvector for $\omega_{2r+1}$ with eigenvalue $\frac{1}{2} (\la_r,\la_r)$ which is nonzero. Therefore the weak $M(1,\boldsymbol{\lambda})$ is not $\N$-graded.

On the other hand,
as noted in Example \ref{ex:r0case}, if $\boldsymbol{\lambda}=(\la_0,0,0,\ldots)$ then  $M(1,\boldsymbol{\lambda})$ is the level $1$ Verma $\widehat{\mathfrak{h}}$-module of highest weight $\la_0$, which is $\N$-graded.
\end{Remark}

\section{On the Whittaker type of $M(1,\boldsymbol{\la})$ and $M(1,\boldsymbol{\la})(\theta)$}

In the last section we proved that $M(1,\boldsymbol{\lambda})$ and $M(1,\boldsymbol{\lambda})$ give rise to Whittaker modules over the Virasoro algebra. In this section we show the map is surjective and calculate the fibers. That is, given a Whittaker type $\zeta$ for $\omega$, we find all possible $\boldsymbol{\lambda}$ giving rise to the Whittaker module of type $\zeta$ for the Virasoro algebra. Our results generalize the rank one case from \cite{T2}.

First we treat the untwisted case.

\begin{Proposition}
Let $r$ be a non-negative integer.
Define a function $\Phi=\Phi_r$ as follows:
\begin{equation}
\Phi: \mathfrak{h}^r\times(\mathfrak{h}\setminus\{0\})\to \C^r\times \C^\times
\end{equation}
given by
\begin{equation}\label{eq:lambda-system}
\Phi(\la_0,\la_1,\ldots,\la_r)=(\zeta_{r+1},\zeta_{r+2},\ldots,\zeta_{2r+1}),
\end{equation}
where
\begin{equation}
\zeta_i=\frac{1}{2}\sum_{\substack{0\le m,n\le r\\ m+n=i-1}} (\la_m,\la_n),\qquad i=r+1,r+2,\ldots,2r+1.
\end{equation}
Then $\Phi$ is surjective and for any $\zeta=(\zeta_{r+1},\zeta_{r+2},\ldots,\zeta_{2r+1})\in\C^r\times\C^\times$, the fiber $\Phi^{-1}(\{\zeta\})$ is is isomorphic to the variety  $S^{\ell-1}_{\C}\times \C^{(\ell-1)r}$ where
\[S^{\ell-1}_{\C}=\{(z_1,z_2,\ldots,z_\ell)\in \C^\ell\mid z_1^2+z_2^2+\cdots+z_\ell^2=1\}.\]
\end{Proposition}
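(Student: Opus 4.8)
The plan is to encode a tuple $\boldsymbol{\la}=(\la_0,\la_1,\dots,\la_r)$ by the single $\mathfrak{h}$-valued polynomial $p_{\boldsymbol{\la}}(x)=\sum_{m=0}^{r}\la_m x^m$, extending $(\cdot,\cdot)$ to a $\C[x]$-bilinear form on $\mathfrak{h}\otimes\C[x]$. Then
\[
\bigl(p_{\boldsymbol{\la}}(x),p_{\boldsymbol{\la}}(x)\bigr)=\sum_{i=0}^{2r}\Bigl(\ \sum_{\substack{m+n=i\\0\le m,n\le r}}(\la_m,\la_n)\Bigr)x^i ,
\]
and comparing with the definition of the $\zeta_i$ one sees that $2\zeta_{j+1}$ is exactly the coefficient of $x^{j}$ in $(p_{\boldsymbol{\la}},p_{\boldsymbol{\la}})$ for $j=r,r+1,\dots,2r$; the top coefficient is $(\la_r,\la_r)=2\zeta_{2r+1}$, while the coefficients of $x^0,\dots,x^{r-1}$ are unconstrained by $\boldsymbol{\zeta}$. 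For surjectivity, fix $\boldsymbol{\zeta}\in\C^r\times\C^\times$ and a vector $e\in\mathfrak{h}$ with $(e,e)=1$, and seek a solution with $\la_m=q_m e$, $q_m\in\C$; then $(p_{\boldsymbol{\la}},p_{\boldsymbol{\la}})=q(x)^2$ where $q(x)=\sum q_m x^m$, and one solves for $q_r$ (a square root of $2\zeta_{2r+1}$, hence nonzero), then for $q_{r-1},\dots,q_0$ successively, each step amounting to one division by $2q_r\neq0$. This produces a preimage with $\la_r=q_r e\neq0$, so $\Phi$ is surjective.

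For the fibers, fix $\boldsymbol{\zeta}\in\C^r\times\C^\times$ and put $c=2\zeta_{2r+1}\ne0$; note that every $\boldsymbol{\la}\in\Phi^{-1}(\boldsymbol{\zeta})$ automatically satisfies $(\la_r,\la_r)=c$, so $\la_r\ne0$ is automatic. The defining equations of $\Phi^{-1}(\boldsymbol{\zeta})$ are triangular: equating coefficients of $x^{2r}$ gives $(\la_r,\la_r)=c$, and for $k=r-1,r-2,\dots,0$ equating coefficients of $x^{r+k}$ gives
\[
(\la_k,\la_r)=\zeta_{r+k+1}-\tfrac12\sum_{m=k+1}^{r-1}(\la_m,\la_{r+k-m}),
\]
whose right-hand side involves only $\la_{k+1},\dots,\la_{r-1}$. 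Reading these from $k=r-1$ down to $k=0$ shows: $\la_r$ may be any point of the smooth affine quadric $Q=\{v\in\mathfrak{h}\mid(v,v)=c\}$, and once $\la_r$ and $\la_{r-1},\dots,\la_{k+1}$ are chosen, the vector $\la_k$ is subject to exactly one inhomogeneous linear condition $(\la_k,\la_r)=(\text{scalar already determined})$. Setting $\mu_k=\la_k-c^{-1}(\la_k,\la_r)\la_r\in\la_r^{\perp}$, and conversely recovering $\la_k=\mu_k+\rho_k\,\la_r$ recursively from the displayed equations (with $\rho_k$ a polynomial in $\la_r,\mu_{k+1},\dots,\mu_{r-1}$), one obtains mutually inverse morphisms identifying
\[
\Phi^{-1}(\boldsymbol{\zeta})\ \cong\ \bigl\{(\la_r;\mu_0,\dots,\mu_{r-1})\ \big|\ \la_r\in Q,\ \mu_k\in\la_r^{\perp}\ \text{for }0\le k\le r-1\bigr\}.
\]
The right-hand side is the total space of the rank-$(\ell-1)r$ vector bundle $(TQ)^{\oplus r}$ over $Q$, where $TQ$ has fibre $v^{\perp}$ at $v$; scaling by a fixed square root of $c$ identifies $Q$ with $S^{\ell-1}_{\C}$.

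It remains to see that $(TQ)^{\oplus r}$ is a trivial vector bundle, and this is the one step not reducible to linear algebra with the triangular system above. Since $Q$ is the smooth affine hypersurface $\{(v,v)=c\}$ in $\mathfrak{h}\cong\C^{\ell}$ and the gradient $2v$ of its defining function is nowhere zero on $Q$, the normal bundle of $Q$ is trivial; hence $TQ\oplus\mathcal{O}_Q\cong\mathcal{O}_Q^{\oplus\ell}$, so $(TQ)^{\oplus r}$ is stably free of rank $(\ell-1)r$ over the $(\ell-1)$-dimensional smooth affine variety $Q$. By Bass's cancellation theorem a stably free module of rank strictly greater than the dimension is free, which disposes of the case $r\ge2$ (the case $\ell=1$ being trivial, $Q$ then being two reduced points and the bundle of rank $0$). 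For $r=1$ one uses instead that the tangent bundle of the complex affine quadric $S^{\ell-1}_{\C}$ is itself trivial: $\det TQ$ is trivial by the same stably-free relation, and writing the quadratic form in hyperbolic coordinates one exhibits, via explicit syzygies of the coordinate row, a trivial subbundle of $TQ$ of corank one (for instance tangent fields of the form $u_i\partial_{u_i}-v_i\partial_{v_i}$ and their analogues, which together span such a subbundle on all of $Q$), forcing $TQ$ to be free. Combining the cases yields $\Phi^{-1}(\boldsymbol{\zeta})\cong S^{\ell-1}_{\C}\times\C^{(\ell-1)r}$. The main obstacle is precisely this last triviality input; everything preceding it is bookkeeping with a triangular system of quadratic equations.
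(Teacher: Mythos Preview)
Your triangular reduction of the system is exactly what the paper does: write the equations for $i=2r+1,2r,\ldots,r+1$, peel off $(\la_r,\la_r)=2\zeta_{2r+1}$, and observe that each subsequent equation pins down $(\la_k,\la_r)$ in terms of already-chosen data, so that $\la_r$ ranges over the affine quadric and each $\la_k$ then ranges over an affine hyperplane parallel to $\la_r^\perp$. Your surjectivity argument via the polynomial $p_{\boldsymbol{\la}}(x)$ and the one-dimensional ansatz $\la_m=q_m e$ is a pleasant repackaging but amounts to the same recursion.

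Where you diverge from the paper is in the level of rigor you demand of the word ``isomorphic''. The paper's proof literally concludes that the fiber ``is in bijection with'' $S^{\ell-1}_\C\times\C^{(\ell-1)r}$; it does not attempt to exhibit a global algebraic isomorphism, and is content with the observation that for each fixed $\la_r$ the remaining equations cut out an affine space of the correct dimension. You correctly notice that, taken literally as a statement about varieties, this needs more: the solution set is naturally the total space of the rank-$(\ell-1)r$ bundle $(TQ)^{\oplus r}$ over $Q\cong S^{\ell-1}_\C$, and one must trivialize it. Your Bass-cancellation argument for $r\ge 2$ (rank $(\ell-1)r>\ell-1=\dim Q$, and $(TQ)^{\oplus r}$ stably free since $TQ\oplus\mathcal{O}\cong\mathcal{O}^{\oplus\ell}$) is clean and correct.

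The one genuine gap in your write-up is the case $r=1$, $\ell\ge 3$, where you need $TQ$ itself to be free and Bass no longer applies. Your sketch (``exhibit a trivial subbundle of corank one via hyperbolic-coordinate vector fields, then use triviality of $\det TQ$'') is not a proof as written: you do not actually produce $\ell-2$ global sections of $TQ$ that are linearly independent at every point of $Q$, and for small $\ell$ (e.g.\ $\ell=3$, where $Q$ retracts onto the real $2$-sphere) the obvious candidates visibly fail to span somewhere. The algebraic parallelizability of the smooth complex affine quadric is true but is a nontrivial fact that deserves either a precise construction or a reference; it is not something one can wave through. The paper, for its part, simply does not engage with this issue at all---so your proof is strictly more careful than the paper's, but your handling of the $r=1$ endpoint should be tightened or cited rather than sketched.
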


\begin{proof}
Taking $i=2r+1, 2r, 2r-1, \ldots, r+1$, the system \eqref{eq:lambda-system} written out is
\[
\left\{\begin{aligned}
(\la_r,\la_r) &= 2\zeta_{2r+1} \\
(\la_{r-1},\la_r)+(\la_r,\la_{r-1}) &= 2\zeta_{2r} \\
(\la_{r-2},\la_r)+(\la_{r-1},\la_{r-1})+(\la_r,\la_{r-2}) &= 2\zeta_{2r-1} \\
&\;\;\vdots \\
(\la_0,\la_r)+(\la_1,\la_{r-1})+\cdots+(\la_r,\la_0) &= 2\zeta_{r+1}
\end{aligned}\right.\]
which is equivalent to
\[
\left\{\begin{aligned}
(\la_r,\la_r) &= 2\zeta_{2r+1}\\
(\la_{r-1},\la_r) &= \zeta_{2r}\\
(\la_{r-2},\la_r) &= \zeta_{2r-1} - \tfrac{1}{2}(\la_{r-1},\la_{r-1})\\
&\;\;\vdots \\
(\la_0,\la_r) &= \zeta_{r+1}-\tfrac{1}{2}\sum_{\substack{1\le m,n\le r-1\\ m+n=r}} (\la_m,\la_n)
\end{aligned}\right.\]
Letting $\la_{ij}=(\la_i,h_j)$ denote the coordinates of the vector $\la_i\in\mathfrak{h}$ in the basis $\{h_j\}_{j=1}^\ell$, the first equation is equivalent to the quadratic equation $\la_{r1}^2+\cdots+\la_{r\ell}^2 = 2\zeta_{2r+1}$. Putting $z_j=\frac{\la_{rj}}{(2\zeta_{2r+1})^{1/2}}$ (for some choice of square root of the nonzero complex number $2\zeta_{2r+1}$) shows that the $\la_r$ are in bijection with $S_\C^{\ell-1}$. The remaining equations are affine equations which have $(\ell-1)$-dimensional affine solution spaces at each of the $r$ steps, since $\la_r\neq 0$. This proves that $\Phi$ is surjective and that each fiber $\Phi^{-1}(\{\zeta\})$ is in bijection with the affine variety $S_\C^{\ell-1}\times\C^{(\ell-1)r}$.
\end{proof}

Similarly in the twisted case we have the following result.

\begin{Proposition}
Let $r$ be a positive integer.
Define a function $\Psi=\Psi_r$ as follows:
\begin{equation}
\Psi: \mathfrak{h}^{r-1}\times(\mathfrak{h}\setminus\{0\})\to \C^{r-1}\times \C^\times
\end{equation}
given by
\begin{equation}\label{eq:lambda-system}
\Phi(\la_{1/2},\la_{3/2},\ldots,\la_{r-1/2})=(\zeta_{r+1},\zeta_{r+2},\ldots,\zeta_{2r}),
\end{equation}
where
\begin{equation}
\zeta_i=\frac{1}{2}\sum_{\substack{m,n\in\{\frac{1}{2},\frac{3}{2},\ldots,r-\frac{1}{2}\}\\ m+n=i-1}} (\la_m,\la_n),\qquad i=r+1,r+2,\ldots,2r.
\end{equation}
Then $\Psi$ is surjective and for any $\zeta=(\zeta_{r+1},\zeta_{r+2},\ldots,\zeta_{2r})\in\C^{r-1}\times\C^\times$, the fiber $\Psi^{-1}(\{\zeta\})$ is isomorphic to the variety $S^{\ell-1}_{\C}\times \C^{(\ell-1)(r-1)}$ where
\[S^{l-1}_{\C}=\{(z_1,z_2,\ldots,z_l)\in \C^l\mid z_1^2+z_2^2+\cdots+z_l^2=1\}.\]

\end{Proposition}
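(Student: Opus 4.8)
The plan is to mimic the proof of the untwisted case (the preceding Proposition), adapting the indexing to half-integers. Writing out the system defining $\Psi$ by taking $i=2r,\,2r-1,\,\ldots,\,r+1$ in turn, the top equation is
\[
\frac{1}{2}\sum_{\substack{m,n\in\{1/2,3/2,\ldots,r-1/2\}\\ m+n=2r-1}}(\la_m,\la_n) = \zeta_{2r},
\]
and since $m,n\le r-\tfrac12$ forces $m=n=r-\tfrac12$, this reduces to $(\la_{r-1/2},\la_{r-1/2})=2\zeta_{2r}$. The next equation, from $i=2r-1$, forces $\{m,n\}=\{r-\tfrac32,\,r-\tfrac12\}$ in the nonzero terms, giving $(\la_{r-3/2},\la_{r-1/2})=\zeta_{2r-1}$. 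In general, for each $i$ from $2r$ down to $r+1$, the term involving $\la_{i-1-r+1/2}=\la_{i-r-1/2}$ paired with $\la_{r-1/2}$ appears, and all other terms on the left involve only $\la_{1/2},\ldots,\la_{r-3/2}$; so after moving those known quantities to the right-hand side we obtain a triangular system
\[
\left\{\begin{aligned}
(\la_{r-1/2},\la_{r-1/2}) &= 2\zeta_{2r},\\
(\la_{r-3/2},\la_{r-1/2}) &= \zeta_{2r-1},\\
&\;\;\vdots\\
(\la_{1/2},\la_{r-1/2}) &= \zeta_{r+1} - \tfrac{1}{2}\!\!\sum_{\substack{m,n\in\{3/2,\ldots,r-3/2\}\\ m+n=r}}\!\!(\la_m,\la_n),
\end{aligned}\right.
\]
exactly analogous to the untwisted display, but with $r$ replaced by $r-1$ in the count of free vectors.

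Next I would solve this system. In the basis $\{h_j\}_{j=1}^\ell$, writing $\la_{i,j}=(\la_i,h_j)$, the first equation becomes $\la_{r-1/2,1}^2+\cdots+\la_{r-1/2,\ell}^2 = 2\zeta_{2r}$; since $\zeta_{2r}\neq 0$ we may pick a square root and set $z_j = \la_{r-1/2,j}/(2\zeta_{2r})^{1/2}$, putting the choices of $\la_{r-1/2}$ in bijection with $S^{\ell-1}_\C$. Having fixed $\la_{r-1/2}\neq 0$, each of the remaining $r-1$ equations is a single \emph{affine-linear} equation in the coordinates of one further vector $\la_{i-1/2}$ (with coefficients depending on $\la_{r-1/2}$ and on the previously-chosen $\la$'s), and since $\la_{r-1/2}\neq 0$ the linear functional $x\mapsto(x,\la_{r-1/2})$ is nonzero, so its solution set is an $(\ell-1)$-dimensional affine subspace of $\mathfrak{h}\cong\C^\ell$. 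Choosing these in order $\la_{r-3/2},\la_{r-5/2},\ldots,\la_{1/2}$ (note the right-hand sides only ever involve already-determined vectors) exhibits the fiber $\Psi^{-1}(\{\zeta\})$ as $S^{\ell-1}_\C\times\C^{(\ell-1)(r-1)}$; surjectivity of $\Psi$ is immediate since every such fiber is nonempty.

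I do not expect a genuine obstacle: the only point requiring care is the bookkeeping of which half-integer indices appear in each equation, i.e. checking that the support condition $m+n=i-1$ with $m,n\in\{\tfrac12,\ldots,r-\tfrac12\}$ does force the pairing with $\la_{r-1/2}$ to isolate exactly one new vector at each step, so that the system is genuinely triangular. This is the same combinatorial fact used in the proof of the untwisted Proposition (there the relevant identity is "$m+n\ge r$ and $\max\{m,n\}\le r$ imply $\min\{m,n\}\ge 0$", here it is "$m+n\ge r$ and $\max\{m,n\}\le r-\tfrac12$ imply $\min\{m,n\}\ge\tfrac12$"), so the argument transfers essentially verbatim. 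One should also remark that the degenerate highest-weight-type situation is excluded here because $r$ is assumed positive and $\la_{r-1/2}\neq0$, so $\zeta_{2r}\neq0$ and the sphere $S^{\ell-1}_\C$ is the correct (non-empty, smooth) fiber over the top coordinate.
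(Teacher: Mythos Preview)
Your proposal is correct and follows essentially the same approach as the paper's proof: you write out the system for $i=2r,2r-1,\ldots,r+1$, reduce it to the same triangular form, and then solve it exactly as in the untwisted case (the paper itself merely says ``The remaining argument is identical to the untwisted case'' at that point, so you have in fact supplied more detail than the paper does).
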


\begin{proof}
Taking $i=2r, 2r-1, 2r-1, \ldots, r+1$, the system \eqref{eq:lambda-system} written out is
\[
\left\{\begin{aligned}
(\la_{r-1/2},\la_{r-1/2}) &= 2\zeta_{2r} \\
(\la_{r-3/2},\la_{r-1/2})+(\la_{r-1/2},\la_{r-3/2}) &= 2\zeta_{2r-1} \\
(\la_{r-5/2},\la_{r-1/2})+(\la_{r-3/2},\la_{r-3/2})+(\la_{r-1/2},\la_{r-5/2}) &= 2\zeta_{2r-2} \\
&\;\;\vdots \\
(\la_{1/2},\la_{r-1/2})+(\la_{3/2},\la_{r-3/2})+\cdots+(\la_{r-1/2},\la_{1/2}) &= 2\zeta_{r+1}
\end{aligned}\right.\]
which is equivalent to
\[
\left\{\begin{aligned}
(\la_{r-1/2},\la_{r-1/2}) &= 2\zeta_{2r}\\
(\la_{r-3/2},\la_{r-1/2}) &= \zeta_{2r-1}\\
(\la_{r-5/2},\la_{r-1/2}) &= \zeta_{2r-2} - \tfrac{1}{2}(\la_{r-3/2},\la_{r-3/2})\\
&\;\;\vdots \\
(\la_{1/2},\la_{r-1/2}) &= \zeta_{r+1}-\tfrac{1}{2}\sum_{\substack{m,n\in\frac{3}{2}+\N\\ m+n=r}} (\la_m,\la_n)
\end{aligned}\right.\]
The remaining argument is identical to the untwisted case.
\end{proof}

\bibliographystyle{siam}

\end{document}